\numberwithin{equation}{section}
\newcommand{\B}{\mathbb{B}}
\newcommand{\V}{\mathbb{V}}
\newcommand{\N}{\mathbb{N}}
\newcommand{\R}{\mathbb{R}}
\newcommand{\sfd}{{\sf d}}
\renewcommand{\d}{{\mathrm d}}
\newcommand{\e}{{\rm e}}
\newcommand{\restr}[1]{\lower3pt\hbox{\(|_{#1}\)}}
\newcommand{\nchi}{{\raise.3ex\hbox{\(\chi\)}}}
\newcommand{\1}{\mathbbm 1}
\newcommand{\fr}{\penalty-20\null\hfill\(\blacksquare\)}
\newcommand{\X}{{\rm X}}
\newcommand{\Y}{{\rm Y}}
\newcommand{\ppi}{\boldsymbol{\pi}}
\newcommand{\LIP}{{\rm LIP}}
\newcommand{\Lip}{{\rm Lip}}
\newcommand{\lip}{{\rm lip}}
\newcommand{\Cyl}{{\rm Cyl}}
\newcommand{\Ch}{{\rm Ch}}
\newtheorem{theorem}{Theorem}[section]
\newtheorem{corollary}[theorem]{Corollary}
\newtheorem{lemma}[theorem]{Lemma}
\newtheorem{proposition}[theorem]{Proposition}
\newtheorem{definition}[theorem]{Definition}
\newtheorem{remark}[theorem]{Remark}
\title[Yet another proof of the density in energy of Lipschitz functions]{Yet another proof of the density \\ in energy of Lipschitz functions}
\author{Danka Lu\v{c}i\'{c}}
\address{Department of Mathematics and Statistics,
P.O.\ Box 35 (MaD), FI-40014 University of Jyvaskyla}
\email{danka.d.lucic@jyu.fi}
\author{Enrico Pasqualetto}
\address{Department of Mathematics and Statistics,
P.O.\ Box 35 (MaD), FI-40014 University of Jyvaskyla}
\email{enrico.e.pasqualetto@jyu.fi}
\begin{document}
\date{\today} 
\keywords{Sobolev space, Cheeger energy, plan with barycenter, cylindrical function}
\subjclass[2020]{53C23, 46E35, 49J52, 46N10}
\begin{abstract}
We provide a new, short proof of the density in energy of Lipschitz functions into the metric Sobolev space defined by using plans with barycenter
(and thus, a fortiori, into the Newtonian--Sobolev space). Our result covers first-order Sobolev spaces of exponent \(p\in(1,\infty)\), defined
over a complete separable metric space endowed with a boundedly-finite Borel measure.

Our proof is based on a completely smooth analysis: first we reduce the problem to the Banach space setting, where we consider smooth functions
instead of Lipschitz ones, then we rely on classical tools in convex analysis and on the superposition principle for normal \(1\)-currents.
Along the way, we obtain a new proof of the density in energy of smooth cylindrical functions in Sobolev spaces defined over a separable Banach
space endowed with a finite Borel measure.
\end{abstract}
\maketitle
\section{Introduction}
\subsection{General overview.}
Sobolev calculus on metric measure spaces has been a field of intense research activity for almost three decades.
In this paper, we focus on two approaches: the Sobolev space \(H^{1,p}(\X,\mu)\) obtained via relaxation and the Sobolev space \(W^{1,p}(\X,\mu)\) defined using plans with barycenter.
More specifically, fix a metric measure space \((\X,\sfd,\mu)\), i.e.\ \((\X,\sfd)\) is a complete separable metric space endowed with a boundedly-finite Borel measure \(\mu\geq 0\), and \(p\in(1,\infty)\). Then:
\begin{itemize}
\item By \(H^{1,p}(\X,\mu)\) we mean the Sobolev space via relaxation of Lipschitz functions, which was introduced by Ambrosio--Gigli--Savar\'{e} \cite{AmbrosioGigliSavare11}
as a variant of Cheeger's approach \cite{Cheeger00}; see Definition \ref{def:H1p}. We denote by \(|Df|_H\) the minimal relaxed slope of \(f\in H^{1,p}(\X,\mu)\).
\item By \(W^{1,p}(\X,\mu)\) we mean the Sobolev space defined using plans with barycenter, which was introduced in \cite{Sav19} after \cite{AmbrosioDiMarinoSavare,AmbrosioGigliSavare11,AmbrosioGigliSavare11-3};
see Definition \ref{def:W1p}. The notion of plan with barycenter we consider is essentially taken from \cite{Sav19}. We denote by \(|Df|_W\) the minimal weak upper gradient of \(f\in W^{1,p}(\X,\mu)\),
while \(\mathcal B_q(\X,\mu)\) is the space of plans \(\ppi\) with barycenter \({\sf Bar}(\ppi)\) in \(L^q(\mu)\), where \(q\) stands for the conjugate exponent of \(p\); see Definition \ref{def:plan}.
\end{itemize}
In this paper, we provide a new proof of the equivalence between \(H^{1,p}(\X,\mu)\) and \(W^{1,p}(\X,\mu)\), i.e.
\begin{equation}\label{eq:goal}
H^{1,p}(\X,\mu)=W^{1,p}(\X,\mu),\qquad|Df|_W=|Df|_H\;\text{ for every }f\in W^{1,p}(\X,\mu).
\end{equation}
It is worth underlining that we do not make any additional assumptions on \((\X,\sfd,\mu)\). In particular, we are not assuming that \(\mu\) is doubling nor the validity
of a Poincar\'{e} inequality; in the doubling-Poincar\'{e} framework, the equivalence was proved in \cite{Cheeger00,Shanmugalingam00}. We can rephrase \eqref{eq:goal} as follows:
\begin{center}
Lipschitz functions are \emph{dense in energy} in \(W^{1,p}(\X,\mu)\).
\end{center}
The result stated in \eqref{eq:goal} or variants of it were obtained earlier in the literature: 
\begin{itemize}
\item The first proof was obtained by Ambrosio--Gigli--Savar\'{e} in \cite{AmbrosioGigliSavare11-3},
where another class of plans (called \emph{test plans}) was used; see Remark \ref{rmk:cf_test_plan} for a comparison
with the notion of \(W^{1,p}(\X,\mu)\) we consider in this paper. The proof in \cite{AmbrosioGigliSavare11-3}
is based on the metric Hopf--Lax semigroup.
\item Savar\'{e} proved in \cite{Sav19} that \eqref{eq:goal} holds by using the von Neumann min-max principle and two representations of the dual Cheeger energy.
This approach is the closest to ours.
\item Eriksson-Bique proved in \cite{EB20} a variant of \eqref{eq:goal} via a more direct approximation technique. On the one hand, the result in \cite{EB20} is (a priori) weaker,
since it shows the identification between \(H^{1,p}(\X,\mu)\) and the Newtonian--Sobolev space \(N^{1,p}(\X,\mu)\); see the relative discussion in Section \ref{ss:consequences}.
On the other hand, \cite{EB20} covers also the case of the exponent \(p=1\).
\end{itemize}
Compared to the previous arguments, the novelty of our proof of \eqref{eq:goal} is that it relies on a purely smooth analysis. More precisely, since we can embed \((\X,\sfd)\)
isometrically into a Banach space, we can reduce the problem to the case where \(\X=\B\) itself is a Banach space. In this framework, we argue by using only smooth functions,
their Fr\'{e}chet differentials, classical tools in convex analysis, and normal \(1\)-currents. Neither Lipschitz functions nor other metric tools are actually needed.
\subsection{The proof strategy}
Up to a localisation argument and a Kuratowski embedding, we can reduce ourselves to addressing the problem in the case where \(\mu\) is finite and
\(\X=\B\) is a separable Banach space. We then consider the algebra \(\Cyl(\B)\) of \emph{cylindrical functions} (Definition \ref{def:cylindrical_fc}).
The advantage of working with cylindrical functions is that they are both smooth (of class \(C^\infty\)) and strongly dense in \(L^p(\mu)\). We define
the space \(H^{1,p}_{cyl}(\B,\mu)\) in analogy with \(H^{1,p}(\B,\mu)\), but using cylindrical functions in the relaxation procedure instead of Lipschitz functions.
We denote the corresponding minimal relaxed slope by \(|Df|_{H,cyl}\). Therefore, the new goal is to prove that
\begin{equation}\label{eq:goal_cyl}
H^{1,p}_{cyl}(\B,\mu)=W^{1,p}(\B,\mu),\qquad|Df|_W=|Df|_{H,cyl}\;\text{ for every }f\in W^{1,p}(\B,\mu).
\end{equation}
It is easy to show that \(H^{1,p}_{cyl}(\B,\mu)\subseteq H^{1,p}(\B,\mu)\subseteq W^{1,p}(\B,\mu)\) and \(|Df|_W\leq|Df|_H\leq|Df|_{H,cyl}\) for every \(f\in H^{1,p}_{cyl}(\B,\mu)\).
It follows that \eqref{eq:goal_cyl} implies \eqref{eq:goal}. To prove \eqref{eq:goal_cyl}, it suffices to check that
\begin{equation}\label{eq:goal_cyl_suff}
W^{1,p}(\B,\mu)\subseteq H^{1,p}_{cyl}(\B,\mu),\qquad\||Df|_{H,cyl}\|_{L^p(\mu)}\leq\||Df|_W\|_{L^p(\mu)}\;\text{ for all }f\in W^{1,p}(\B,\mu).
\end{equation}
In order to prove \eqref{eq:goal_cyl_suff}, we apply well-known results in convex analysis about Fenchel conjugates; see (the proof of) Theorem \ref{thm:equiv_Ban}.
Our arguments are strongly inspired by some ideas contained in Bouchitt\'{e}--Buttazzo--Seppecher's paper \cite{BBS97},
where Sobolev spaces on weighted Euclidean spaces were introduced. Roughly speaking, we consider the densely-defined
unbounded linear operator \(\d\colon L^p(\mu)\to L^p(\mu;\B^*)\) with domain \(D(\d)=\Cyl(\B)\), which assigns to
each function \(f\in\Cyl(\B)\) the \(\mu\)-a.e.\ equivalence class \(\d f\) of its Fr\'{e}chet differential. To prove the property \eqref{eq:goal_cyl_suff}
amounts to showing that \({\rm sc}^-\mathcal F(f)\leq\frac{1}{p}\||Df|_W\|_{L^p(\mu)}^p\)
for every \(f\in W^{1,p}(\B,\mu)\), where \({\rm sc}^-\mathcal F\) denotes the weak lower semicontinuous
envelope of the functional \(\mathcal F\colon L^p(\mu)\to[0,+\infty]\) given by
\[
\mathcal F(f)\coloneqq\frac{1}{p}\int\|\d_x f\|_{\B^*}^p\,\d\mu(x)\quad\text{ for every }f\in\Cyl(\B)
\]
and \(\mathcal F(f)\coloneqq+\infty\) otherwise (some extra care is needed when \(\mu\) is not fully supported). In order to achieve this goal, we need to prove the following statement:
given any \(L\in D(\d^*)\), there exists a plan \(\ppi\in\mathcal B_q(\B,\mu)\) such that \(\partial\ppi=(\d^*L)\mu\)
(see \eqref{eq:bdry_plan}) and \(\|{\sf Bar}(\ppi)\|_{L^q(\mu)}\leq\|L\|_{L^p(\mu;\B^*)^*}\). Here, we denote by \(\d^*\colon L^p(\mu;\B^*)^*\to L^q(\mu)\) the adjoint operator of \(\d\).
This is the content of Proposition \ref{prop:L_to_plan}, whose proof is based on Smirnov's superposition principle for normal \(1\)-currents \cite{Smirnov93}.
\subsection{Some additional comments}\label{ss:consequences}
With \eqref{eq:goal_cyl}, we recover a result by Savar\'{e} \cite{Sav19}, which states that cylindrical functions are dense in energy in \(W^{1,p}(\B,\mu)\); see also the paper \cite{FornasierSavareSodini22}.
Whereas Savar\'{e} obtains \eqref{eq:goal_cyl} as a consequence of the density in energy of Lipschitz functions, our proof goes in the opposite direction:
we prove directly \eqref{eq:goal_cyl}, then we obtain \eqref{eq:goal} as a corollary.
\medskip

Sobolev spaces over certain classes of \emph{weighted} Banach spaces (i.e.\ Banach spaces equipped with an arbitrary Borel measure) have been thoroughly
investigated in several articles. Weighted Euclidean spaces were studied e.g.\ in \cite{BBS97,Zhi00,Louet14,GP16-2,DMLP20,LPR21}, weighted Hilbert spaces (more generally,
weighted locally-\({\sf CAT}(\kappa)\) spaces) in \cite{DMGSP18}, and weighted reflexive Banach spaces in \cite{Sav19,FornasierSavareSodini22,Sodini22,PasRaj23}.
\medskip

We now consider the Newtonian--Sobolev space \(N^{1,p}(\X,\mu)\) introduced by Shanmugalingam \cite{Shanmugalingam00} (see also \cite{HKST15}) and the associated notion
of minimal weak upper gradient \(|Df|_N\). It is not too difficult to show that \(H^{1,p}(\X,\mu)\subseteq N^{1,p}(\X,\mu)\subseteq W^{1,p}(\X,\mu)\) and
\(|Df|_W\leq|Df|_N\leq|Df|_H\) for every \(f\in H^{1,p}(\X,\mu)\). Therefore, it follows directly from \eqref{eq:goal} that \(H^{1,p}(\X,\mu)=N^{1,p}(\X,\mu)\)
and that \(|Df|_N=|Df|_H\) for every \(f\in N^{1,p}(\X,\mu)\), in other words that Lipschitz functions are dense in energy in the Newtonian--Sobolev space; cf.\ with Remark \ref{rmk:cf_Newtonian}.
\medskip

We also mention that it seems that our proof strategy cannot be used to prove the identification between the spaces \(H^{1,1}\) and \(W^{1,1}\). Nevertheless, we do believe that it can
be adapted to show the equivalence of the different notions of functions of bounded variation, as well as to study various notions of Sobolev spaces of exponent \(p=\infty\).
These questions will be addressed in future works.
\subsection*{Acknowledgements}
The authors thank Luigi Ambrosio, Giuseppe Buttazzo, Toni Ikonen, and Giacomo Del Nin for the useful discussions
on the topics of this paper. The second named author has been supported by the MIUR-PRIN 202244A7YL
project ``Gradient Flows and Non-Smooth Geometric Structures with Applications to Optimization and Machine Learning''.
\section{Preliminaries}
Given \(p\in[1,\infty)\), we tacitly denote by \(q\coloneqq\frac{p}{p-1}\in(1,\infty]\) its conjugate exponent, and vice versa.
\subsection{Metric and measure spaces}
Given metric spaces \((\X,\sfd_\X)\), \((\Y,\sfd_\Y)\), we denote by \(C(\X;\Y)\) the space of continuous maps from \(\X\) to \(\Y\).
We endow its subset \(C_b(\X;\Y)\) consisting of bounded elements with the distance \(\sfd_{C_b(\X;\Y)}(\varphi,\psi)\coloneqq\sup_{x\in\X}\sfd_\Y(\varphi(x),\psi(x))\).
If \(\Y=\B\) is a Banach space, \(C_b(\X;\B)\) is a vector space and \(\sfd_{C_b(\X;\B)}\) is induced by the supremum norm \(\|\cdot\|_{C_b(\X;\B)}\).
We denote by \(\mathfrak M(\X)\) the set of (finite) signed Borel measures on \(\X\) and \(\mathfrak M_+(\X)\coloneqq\{\mu\in\mathfrak M(\X)\,:\,\mu\geq 0\}\).
For any \(\mu\in\mathfrak M(\X)\), we denote by \(\mu^+,\mu^-\in\mathfrak M_+(\X)\) the \emph{positive part} and the \emph{negative part} of \(\mu\), respectively. Recall that \(\mu=\mu^+-\mu^-\).
The total variation measure of \(\mu\in\mathfrak M(\X)\) is defined as \(|\mu|\coloneqq\mu^++\mu^-\in\mathfrak M_+(\X)\).
We endow \(\mathfrak M(\X)\) with the \emph{weak topology}, i.e.\ with the coarsest topology such that \(\mathfrak M(\X)\ni\mu\mapsto\int f\,\d\mu\)
is a continuous function for every \(f\in C_b(\X)\coloneqq C_b(\X;\R)\). We denote by \(\LIP(\X;\Y)\subseteq C(\X;\Y)\) the space of all Lipschitz maps from \(\X\) to \(\Y\).
Moreover, we define \(\LIP(\X)\coloneqq\LIP(\X;\R)\) and \(\LIP_b(\X)\coloneqq\LIP(\X)\cap C_b(\X)\). We call \(C_{bs}(\X)\) the space of \(f\in C(\X)\) whose support \({\rm spt}(f)\) is bounded
and we define \(\LIP_{bs}(\X)\coloneqq\LIP(\X)\cap C_{bs}(\X)\).
By \(\Lip(f)\) we mean the Lipschitz constant of \(f\in\LIP(\X)\), while its \emph{asymptotic slope} \(\lip_a(f)\colon\X\to[0,+\infty)\) is given by
\[
\lip_a(f)(x)\coloneqq\inf_{r>0}\Lip(f|_{B_r(x)})\quad\text{ for every }x\in\X.
\]
Let us now focus on the space \(C([0,1];\X)\) of curves. The \emph{evaluation maps} \(\e_\pm\colon C([0,1];\X)\to\X\) are the \(1\)-Lipschitz maps given by \(\e_+(\gamma)\coloneqq\gamma_1\)
and \(\e_-(\gamma)\coloneqq\gamma_0\). When \(\gamma\in\LIP([0,1];\X)\), the \emph{metric speed} \(|\dot\gamma_t|\coloneqq\lim_{h\to 0}\sfd_\X(\gamma_{t+h},\gamma_t)/|h|\) exists
for \(\mathcal L_1\)-a.e.\ \(t\in[0,1]\), where \(\mathcal L_1\) stands for the restriction of the one-dimensional Lebesgue measure to \([0,1]\). The \emph{length} of \(\gamma\) is defined
as \(\ell(\gamma)\coloneqq\int_0^1|\dot\gamma_t|\,\d t\). We say that \(\gamma\) is \emph{of constant speed} if \(|\dot\gamma|\) is \(\mathcal L_1\)-a.e.\ constant, so that
\(|\dot\gamma_t|=\ell(\gamma)\) for \(\mathcal L_1\)-a.e.\ \(t\in[0,1]\). By a \emph{plan} on \(\X\) we mean any measure \(\ppi\in\mathfrak M_+(C([0,1];\X))\). We define the
\emph{boundary} of \(\ppi\) as
\begin{equation}\label{eq:bdry_plan}
\partial\ppi\coloneqq(\e_+)_\#\ppi-(\e_-)_\#\ppi\in\mathfrak M(\X).
\end{equation}
Moreover, we define the Borel measure \(\|\ppi\|\geq 0\) on \(\X\) as \(\|\ppi\|\coloneqq\int\ell(\gamma)\,\gamma_\#\mathcal L_1\,\d\ppi(\gamma)\).
\medskip

One can also readily prove that, given any function \(f\in C_b(\X)\) such that \(f\geq 0\), it holds that
\begin{equation}\label{eq:lsc_int_plan}
\mathfrak M_+(C([0,1];\X))\ni\ppi\mapsto\int\!\!\!\int_0^1 f(\gamma_t)|\dot\gamma_t|\,\d t\,\d\ppi(\gamma)\quad\text{ is weakly lower semicontinuous.}
\end{equation}
\subsection{Banach spaces and \texorpdfstring{\(1\)}{1}-currents}
Let \(\B\) be a Banach space. For any function \(f\in C^\infty(\B)\), we denote by \(\d f\in C^\infty(\B;\B^*)\) its Fr\'{e}chet differential \(x\mapsto\d_x f\),
where \(\B^*\) is the dual of \(\B\). We define \(C^\infty_b(\B;\B^*)\coloneqq C^\infty(\B;\B^*)\cap C_b(\B;\B^*)\). If \(\V\) is a finite-dimensional Banach space,
then we also consider the space \(C_c^\infty(\V;\V^*)\) of all those \(\omega\in C^\infty_b(\V;\V^*)\) having compact support. The space of \emph{\(1\)-currents} in \(\V\)
is defined as the dual \({\bf M}_1(\V)\) of the normed space \((C_c^\infty(\V;\V^*),\|\cdot\|_{C_b(\V;\V^*)})\). When \(\V\) is a Euclidean space, these are the
\(1\)-currents in the sense of Federer--Fleming \cite{FedererFleming60}. The elements of \({\bf M}_1(\V)\) can be identified with the \(\V\)-valued Borel measures
on \(\V\), thus we can consider the total variation measure \(\|T\|\in\mathfrak M_+(\V)\) of every \(T\in{\bf M}_1(\V)\). Given any \(T\in{\bf M}_1(\V)\), we define
\[
\partial T(f)\coloneqq T(\d f)\quad\text{ for every }f\in C^\infty_c(\V).
\]
When the resulting operator \(\partial T\colon C^\infty_c(\V)\to\R\) -- which is called the \emph{boundary} of \(T\) -- belongs to the dual of \((C^\infty_c(\V),\|\cdot\|_{C_b(\V)})\),
we say that \(T\) is a \emph{normal \(1\)-current}. We denote by \({\bf N}_1(\V)\) the space of all normal \(1\)-currents in \(\V\). The boundary \(\partial T\) of each
\(T\in{\bf N}_1(\V)\) can be identified with a (finite) signed Borel measure on \(\V\). A \emph{subcurrent} of \(T\in{\bf M}_1(\V)\) is a current \(S\in{\bf M}_1(\V)\) such
that \(\|S\|+\|T-S\|=\|T\|\). By a \emph{cycle} of \(T\) we mean a subcurrent \(C\in{\bf N}_1(\V)\) of \(T\) such that \(\partial C=0\). We say that \(T\) is \emph{acyclic}
if its unique cycle is the null current. Then the following result holds (see e.g.\ \cite[Proposition 3.8]{PaolStep12}): for any \(T\in{\bf M}_1(\V)\), there exists
a cycle \(C\) of \(T\) such that \(T-C\) is acyclic. The following result states that acyclic normal \(1\)-currents are superpositions of curves:
\begin{theorem}[Superposition principle]\label{thm:Smirnov}
Let \(\V\) be a finite-dimensional Banach space. Then for every acyclic current \(T\in{\bf N}_1(\V)\)
there exists \(\ppi\in\mathfrak M_+(C([0,1];\V))\) concentrated on non-constant Lipschitz
curves of constant speed such that \((\e_+)_\#\ppi=(\partial T)^+\), \((\e_-)_\#\ppi=(\partial T)^-\), and \(\|T\|=\|\ppi\|\).
\end{theorem}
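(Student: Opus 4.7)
The plan is to deduce the statement from Smirnov's classical superposition principle \cite{Smirnov93} for normal $1$-currents. Since $\V$ is a finite-dimensional Banach space, I would fix a linear isomorphism $\Phi\colon\V\to\R^n$ with $n=\dim\V$, which is automatically bi-Lipschitz since all norms on $\R^n$ are equivalent. Pushing forward under $\Phi$ identifies $\mathbf M_1(\V)$ with $\mathbf M_1(\R^n)$ and preserves the boundary operator, subcurrents, cycles, and hence the acyclic property; accordingly, it suffices to prove the result in $\R^n$, keeping in mind that the mass measure $\|T\|$ is computed with respect to the pulled-back $\V$-norm rather than the Euclidean one.

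In $\R^n$ I would invoke Smirnov's theorem, in the form of \cite{PaolStep12}: every acyclic normal $1$-current $T$ admits a representation
\[
T=\int_\Gamma [\gamma]\,\d\ppi(\gamma),
\]
where $\Gamma$ is a Borel set of non-constant Lipschitz curves (which may be taken of constant speed after reparametrisation), $[\gamma]$ is the $1$-current of integration along $\gamma$, and $\ppi$ is a positive Borel measure on $\Gamma$. Crucially, the decomposition is non-cancelling, both at the level of total variation measures,
$\|T\|=\int\|[\gamma]\|\,\d\ppi(\gamma)$, and at the level of boundaries, $\partial T=\int\partial[\gamma]\,\d\ppi(\gamma)=(\e_+)_\#\ppi-(\e_-)_\#\ppi$, with the two endpoint measures mutually singular.

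The two identities of the statement are then immediate. Mutual singularity of $(\e_+)_\#\ppi$ and $(\e_-)_\#\ppi$, combined with $\partial T=(\e_+)_\#\ppi-(\e_-)_\#\ppi$, gives $(\e_\pm)_\#\ppi=(\partial T)^\pm$ by the uniqueness of the Jordan decomposition. For the mass identity, any non-constant Lipschitz curve of constant speed satisfies $\|[\gamma]\|=\gamma_\#(|\dot\gamma_t|\,\d t)=\ell(\gamma)\,\gamma_\#\mathcal L_1$, so integrating against $\ppi$ yields the required $\|T\|=\int\ell(\gamma)\,\gamma_\#\mathcal L_1\,\d\ppi(\gamma)$.

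The main obstacle I expect is ensuring that the non-cancellation and mutual-singularity properties in Smirnov's decomposition survive the reduction to $\R^n$ equipped with the pulled-back $\V$-norm, which is typically non-Euclidean, and that the mass formula $\|[\gamma]\|=\ell(\gamma)\,\gamma_\#\mathcal L_1$ is consistent with this norm. This is more a careful consistency check than a conceptual difficulty: the geometric superposition is invariant under bi-Lipschitz relabelings, and the identity $\|[\gamma]\|=\gamma_\#(|\dot\gamma_t|\,\d t)$ is valid in any norm once $\ell$ and $\|\cdot\|$ are interpreted consistently with it.
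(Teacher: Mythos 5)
Your overall direction matches the paper's: both appeal to Smirnov \cite{Smirnov93} together with Paolini--Stepanov \cite{PaolStep12}. However, for a general finite-dimensional $\V$ the two proofs diverge, and your version has a genuine gap.

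The paper does \emph{not} reduce to $\R^n$. For general $\V$ it instead identifies the paper's $1$-currents (duals of $C_c^\infty(\V;\V^*)$, equivalently $\V$-valued Borel measures) with Ambrosio--Kirchheim metric $1$-currents via \cite[Lemma A.3]{PaoStep18}, and then applies \cite[Lemma 5.4]{PaolStep12}, which holds in arbitrary metric spaces. That identification step is what makes \cite{PaolStep12} applicable, and your proposal never makes it: you cite \cite{PaolStep12}, which speaks of Ambrosio--Kirchheim metric currents, but never explain why the paper's notion of $\mathbf M_1(\V)$ is covered by it.

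Your alternative route---transport to Euclidean $\R^n$ via a linear isomorphism $\Phi$ and invoke Smirnov's original theorem---is, in principle, also viable, but then the point you dismiss as ``a careful consistency check'' is actually the mathematical content you would have to supply. Two things need verification. First, that $\V$-acyclicity of $T$ forces Euclidean acyclicity of $\Phi_\#T$: this does hold, because any Euclidean subcurrent of a $1$-current is a pointwise scalar multiple (strict convexity of the Euclidean norm), hence pulls back to a $\V$-subcurrent via $\Phi^{-1}$. Second, that Smirnov's mass identity $\|\Phi_\#T\|_{\mathrm{euc}}=\int\ell_{\mathrm{euc}}(\gamma)\,\gamma_\#\mathcal L_1\,\d\ppi(\gamma)$ upgrades, after $\Phi^{-1}$, to the same identity in the $\V$-norm. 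This requires extracting from Smirnov the pointwise parallelism of the curves' tangents with the orientation of $T$ at $\|T\|$-a.e.\ point (a norm-independent geometric statement), not merely the scalar mass equality, which by itself is norm-dependent. The paper sidesteps both issues by staying within the metric-current framework. Finally, your write-up conflates the two routes: you reduce to $\R^n$ \emph{and} invoke \cite{PaolStep12}, but if you use \cite{PaolStep12} you do not need the reduction (it handles general norms), while if you reduce and use classical Smirnov you must do the norm-transfer work above. Pick one and carry it through; as written, neither is complete.
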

\begin{proof}
Since all norms on a finite-dimensional vector space are equivalent and the Euclidean norm is strictly convex, one can deduce the statement from Smirnov's results in \cite{Smirnov93}.
Alternatively, one can argue as follows: the metric \(1\)-currents on \(\V\) can be identified with the \(\mathbb V\)-valued Borel measures on \(\mathbb V\)
(see \cite[Lemma A.3]{PaoStep18}), thus the statement follows from \cite[Lemma 5.4]{PaolStep12}.
\end{proof}

We will focus on a distinguished class of smooth functions: the algebra of cylindrical functions.
\begin{definition}[Cylindrical function]\label{def:cylindrical_fc}
Let \(\B\) be a Banach space. Then we say that \(f\colon\B\to\R\) is a \emph{cylindrical function} if \(f=g\circ p\)
for some finite-dimensional Banach space \(\V\), some \(g\in C^\infty_c(\V)\), and some bounded linear map \(p\colon\B\to\V\).
We denote by \(\Cyl(\B)\) the space of cylindrical functions.
\end{definition}

It holds that \(f\in\LIP_b(\B)\) and \(\d f\in C^\infty_b(\B;\B^*)\) for every \(f\in\Cyl(\B)\).
Moreover, it holds that
\[
\lip_a(f)(x)=\|\d_x f\|_{\B^*}\quad\text{ for every }f\in\Cyl(\B)\text{ and }x\in\B.
\]
Given Banach spaces \(\B\), \(\V\) with \(\V\) finite-dimensional and a linear \(1\)-Lipschitz operator \(p\colon\B\to\V\),
we define the \emph{pullback} operator \(p^*\colon C^\infty_c(\V;\V^*)\to C^\infty_b(\B;\B^*)\) as follows: given any \(\omega\in C^\infty_c(\V;\V^*)\),
\begin{equation}\label{eq:def_pullback_1forms}
(p^*\omega)(x)\coloneqq p^{\rm adj}\circ((\omega\circ p)(x))\in\B^*\quad\text{ for every }x\in\B,
\end{equation}
where \(p^{\rm adj}\colon\V^*\to\B^*\) stands for the adjoint of \(p\), which is a linear \(1\)-Lipschitz operator. Hence,
\begin{equation}\label{eq:prop_pullback_1-forms}
\|(p^*\omega)(x)\|_{\B^*}\leq\|\omega(p(x))\|_{\V^*}\quad\text{ for every }x\in\B,
\end{equation}
thus in particular \(\|p^*\omega\|_{C_b(\B;\B^*)}\leq\|\omega\|_{C_b(\V;\V^*)}\). Notice that \(f\circ p\in\Cyl(\B)\) for every \(f\in C^\infty_c(\V)\),
and that \(p^*(\d f)=\d(f\circ p)\) thanks to the chain rule for Fr\'{e}chet differentials.
Given any \(\mu\in\mathfrak M_+(\B)\) and \(p\in[1,\infty)\), the \(\mu\)-a.e.\ equivalence
class \([p^*\omega]_\mu\) of \(p^*\omega\) belongs to the \emph{Lebesgue--Bochner space} \(L^p(\mu;\B^*)\),
which consists of all \(L^p(\mu)\)-integrable maps from \(\B\) to \(\B^*\) in the sense of Bochner \cite{DiestelUhl77}.
Notice that \((C^\infty_c(\V;\V^*),\|\cdot\|_{C_b(\V;\V^*)})\ni\omega\mapsto[p^*\omega]_\mu\in L^p(\mu;\B^*)\) is linear \(\mu(\B)\)-Lipschitz by \eqref{eq:prop_pullback_1-forms}.
\subsection{Metric Sobolev spaces}
By a \emph{metric measure space} \((\X,\sfd,\mu)\) we mean a complete and separable metric space \((\X,\sfd)\) together with a boundedly-finite Borel measure \(\mu\geq 0\) on \(\X\),
where ``boundedly-finite'' means that \(\mu(B)<+\infty\) whenever \(B\subseteq\X\) is a bounded Borel set.
Given any exponent $p\in[1,\infty]$, we denote by \((L^p(\mu),\|\cdot\|_{L^p(\mu)})\) the \(p\)-Lebesgue space on \((\X,\sfd,\mu)\). For any measurable function \(f\colon\X\to\R\),
we denote by \([f]_\mu\) its equivalence class up to \(\mu\)-a.e.\ equality. If \(\tilde\mu\) is a boundedly-finite Borel measure on \(\X\) such that \(\mu\leq\tilde\mu\), then we denote by
\({\rm ext}_{\tilde\mu}\colon L^p(\mu)\to L^p(\tilde\mu)\) the unique map satisfying \([{\rm ext}_{\tilde\mu}(f)]_\mu=f\) and \({\rm ext}_{\tilde\mu}(f)=0\) \(\tilde\mu\)-a.e.\ on
\(\big\{\frac{\d\mu}{\d\tilde\mu}=0\big\}\) for every \(f\in L^p(\mu)\).
\begin{remark}\label{rmk:proj_inject}{\rm
Let \((\X,\sfd,\mu)\) be a metric measure space with \({\rm spt}(\mu)=\X\) and let \(p\in[1,\infty]\). Then
\[
\big\{g\in C(\X):[g]_\mu\in L^p(\mu)\big\}\ni f\mapsto[f]_\mu\in L^p(\mu)\quad\text{ is injective.}
\]
Indeed, if two continuous functions agree \(\mu\)-a.e.\ on \(\X\), then they agree everywhere on \({\rm spt}(\mu)\).
\fr}\end{remark}
\subsubsection{Sobolev spaces via relaxation}
The first notion of metric Sobolev space we recall is based on a relaxation procedure.
The next definition, taken from \cite{AmbrosioGigliSavare11-3}, is a variant of Cheeger's one \cite{Cheeger00}.
\begin{definition}[Sobolev space via relaxation of Lipschitz functions]\label{def:H1p}
Let \((\X,\sfd,\mu)\) be a metric measure space and \(p\in(1,\infty)\). We define the \emph{Cheeger energy} functional \(\Ch\colon L^p(\mu)\to[0,+\infty]\) as
\[
\Ch(f)\coloneqq\inf\bigg\{\varliminf_n\frac{1}{p}\int\lip_a(f_n)^p\,\d\mu\;\bigg|\;(f_n)_n\subseteq\LIP_{bs}(\X),\,[f_n]_\mu\rightharpoonup f\text{ weakly in }L^p(\mu)\bigg\}.
\]
Then we define \(H^{1,p}(\X,\mu)\coloneqq\{f\in L^p(\mu):\Ch(f)<+\infty\}\). The \emph{minimal relaxed slope} of a given function \(f\in H^{1,p}(\X,\mu)\) is defined as the unique
\(|Df|_H\in L^p(\mu)^+\) satisfying \(\Ch(f)=\frac{1}{p}\int|Df|_H^p\,\d\mu\).
\end{definition}

On a weighted Banach space, one can give a similar definition using cylindrical functions instead:
\begin{definition}[Sobolev space via relaxation of cylindrical functions]\label{def:H1p_cyl}
Let \(\B\) be a separable Banach space and \(\mu\in\mathfrak M_+(\B)\). Let \(p\in(1,\infty)\) be a given exponent.
We define the \emph{cylindrical Cheeger energy} functional \(\Ch_{cyl}\colon L^p(\mu)\to[0,+\infty]\) as
\[
\Ch_{cyl}(f)\coloneqq\inf\bigg\{\varliminf_n\frac{1}{p}\int\|\d_x f_n\|_{\B^*}^p\,\d\mu\;\bigg|\;(f_n)_n\subseteq\Cyl(\B),\,[f_n]_\mu\rightharpoonup f\text{ weakly in }L^p(\mu)\bigg\}.
\]
Then we define \(H^{1,p}_{cyl}(\B,\mu)\coloneqq\{f\in L^p(\mu):\Ch_{cyl}(f)<+\infty\}\). The \emph{minimal cylindrical relaxed slope} of \(f\in H^{1,p}_{cyl}(\B,\mu)\)
is the unique \(|Df|_{H,cyl}\in L^p(\mu)^+\) satisfying \(\Ch_{cyl}(f)=\frac{1}{p}\int|Df|_{H,cyl}^p\,\d\mu\).
\end{definition}

Definition \ref{def:H1p_cyl} is a particular instance of the notion of metric Sobolev space via relaxation introduced in \cite{Sav19},
because \(\Cyl(\B)\) is a unital separating subalgebra of \(\LIP_b(\B)\) \cite[Example 2.1.19]{Sav19}. In particular, we know from \cite[Lemma 2.1.27]{Sav19}
that \([\Cyl(\B)]_\mu\) is dense in \(L^p(\mu)\). Moreover, the inclusion \(\Cyl(\B)\subseteq\LIP_b(\B)\), a standard cut-off argument, and the pointwise minimality properties
of minimal relaxed slopes (see \cite[Lemma 3.1.11]{Sav19}) ensure that \(H^{1,p}_{cyl}(\B,\mu)\subseteq H^{1,p}(\B,\mu)\) and
\begin{equation}\label{eq:Hcyl_in_H}
|Df|_H\leq|Df|_{H,cyl}\quad\text{ for every }f\in H^{1,p}_{cyl}(\B,\mu).
\end{equation}
\subsubsection{Sobolev spaces via plans}
The next notion was introduced in \cite[Definition 5.1.1]{Sav19} after \cite{AmbrosioDiMarinoSavare}.
\begin{definition}[Plan with barycenter]\label{def:plan}
Let \((\X,\sfd,\mu)\) be a metric measure space and \(q\in(1,\infty]\). We define \(\mathcal B_q(\X,\mu)\) as the set of all
\(\ppi\in\mathfrak M_+(C([0,1];\X))\) concentrated on \(\LIP([0,1];\X)\) such that:
\begin{itemize}
\item[\(\rm i)\)] \(\ppi\) has \emph{barycenter} in \(L^q(\mu)\), i.e.\ there exists a (unique) function \({\sf Bar}(\ppi)\in L^q(\mu)\) such that
\[
\int f\,{\sf Bar}(\ppi)\,\d\mu=\int\!\!\!\int_0^1 f(\gamma_t)|\dot\gamma_t|\,\d t\,\d\ppi(\gamma)\quad\text{ for every }f\in C_{bs}(\X).
\]
\item[\(\rm ii)\)] It holds that \((\e_\pm)_\#\ppi\ll\mu\) and \(\frac{\d(\e_\pm)_\#\ppi}{\d\mu}\in L^q(\mu)\).
\end{itemize}
\end{definition}

In Corollary \ref{cor:W1p_inv} and Proposition \ref{prop:L_to_plan}, we will need the following technical result about plans.
\begin{lemma}\label{lem:wlog_full_spt}
Let \((\X,\sfd,\mu)\) be a metric measure space such that \(S\coloneqq{\rm spt}(\mu)\neq\X\). Let \(C\subseteq\X\setminus S\) be a countable set. Let \(\tilde\mu\geq 0\) be a boundedly-finite Borel measure
on \(\X\) concentrated on \(S\cup C\) such that \(\tilde\mu|_S=\mu\). Fix any \(q\in(1,\infty]\) and \(\tilde\ppi\in\mathcal B_q(\X,\tilde\mu)\). Then \(\tilde\ppi\)-a.e.\ curve \(\gamma\) is either contained
in \(S\) or constant. In particular, \(\ppi\coloneqq\tilde\ppi|_{\LIP([0,1];S)}\in\mathcal B_q(\X,\mu)\), \({\rm ext}_{\tilde\mu}({\sf Bar}(\ppi))={\sf Bar}(\tilde\ppi)\), and \(\partial\ppi=\partial\tilde\ppi\).
\end{lemma}
\begin{proof}
Let \(\Gamma_{\rm const}\) be the set of constant curves in \(\X\) and define \(\Gamma_S\coloneqq\LIP([0,1];S)\). We claim that
\begin{equation}\label{eq:wlog_dull_spt}
\Gamma\coloneqq\big\{\gamma\in\LIP([0,1];\X)\;\big|\;\gamma_t\in S\cup C\text{ for }\mathcal L_1\text{-a.e.\ }t\in\{|\dot\gamma|>0\}\big\}\subseteq\Gamma_S\cup\Gamma_{\rm const}.
\end{equation}
Let us prove \eqref{eq:wlog_dull_spt}. Fix any \(\gamma\in\Gamma\setminus\Gamma_{\rm const}\). We aim to show that \(\gamma([0,1])\cap C=\varnothing\).
We argue by contradiction: suppose \(\gamma_a=x\) for some \(a\in[0,1]\) and \(x\in C\). Up to replacing \(\gamma\) with \(t\mapsto\gamma_{-t}\), we can assume \(a<1\), and we can find
\(b\in(a,1]\) such that \(r\coloneqq\sfd(\gamma_b,x)>0\) and \(\gamma([a,b])\subseteq\X\setminus S\). Since \(N\coloneqq\{\sfd(y,x):y\in C\}\) is countable and
\([a,b]\ni t\mapsto f(t)\coloneqq\sfd(\gamma_t,x)\) is Lipschitz, we deduce that \(f'=0\) holds \(\mathcal L_1\)-a.e.\ on \(f^{-1}(N)\). Moreover,
\(f^{-1}(\R\setminus N)\subseteq\X\setminus(S\cup C)\) and thus \(|\dot\gamma|=0\) holds \(\mathcal L_1\)-a.e.\ on \(f^{-1}(\R\setminus N)\). All in all, it follows that
\[
0<r=\sfd(\gamma_b,\gamma_a)=f(b)-f(a)=\int_a^b f'(t)\,\d t=\int_{f^{-1}(\R\setminus N)}f'(t)\,\d t\leq\int_{f^{-1}(\R\setminus N)}|\dot\gamma_t|\,\d t=0,
\]
which leads to a contradiction. Therefore, we proved that \(\gamma([0,1])\cap C=\varnothing\). Now consider the Lipschitz function \([0,1]\ni t\mapsto g(t)\coloneqq\sfd(\gamma_t,S)\).
Given that \(\gamma\in\Gamma\), we have that \(g=0\) holds \(\mathcal L_1\)-a.e.\ on \(\{|\dot\gamma|>0\}\), thus \(g'=0\) holds \(\mathcal L_1\)-a.e.\ on \(\{|\dot\gamma|>0\}\).
Since \(|g'|\leq|\dot\gamma|\) holds \(\mathcal L_1\)-a.e.\ on \([0,1]\), we conclude that \(g'=0\) holds \(\mathcal L_1\)-a.e.\ on \([0,1]\) and thus \(g\) is constant. Being \(\gamma\)
non-constant, we know that \(\mathcal L_1(\{|\dot\gamma|>0\})>0\), so that \(g=0\) on \([0,1]\). This means that \(\gamma\in\Gamma_S\), so that \eqref{eq:wlog_dull_spt} is proved.
Finally, observe that \(\int\!\!\int_0^1\1_{\X\setminus(S\cup C)}(\gamma_t)|\dot\gamma_t|\,\d t\,\d\tilde\ppi(\gamma)=\int_{\X\setminus(S\cup C)}{\sf Bar}(\tilde\ppi)\,\d\tilde\mu=0\),
whence it follows that for \(\tilde\ppi\)-a.e.\ \(\gamma\) it holds that \(\1_{\X\setminus(S\cup C)}(\gamma_t)|\dot\gamma_t|=0\) for \(\mathcal L_1\)-a.e.\ \(t\in[0,1]\). In particular,
we deduce that \(\tilde\ppi(\LIP([0,1];\X)\setminus\Gamma)=0\). Taking into account also \eqref{eq:wlog_dull_spt}, we have that the first part of the statement is proved. The last part
of the statement then easily follows.
\end{proof}

The following definition of Sobolev space via plans is taken from \cite[Definition 5.1.4]{Sav19}. Similar notions were previously introduced
in \cite{AmbrosioGigliSavare11,AmbrosioGigliSavare11-3,AmbrosioDiMarinoSavare}, see Remark \ref{rmk:cf_test_plan} for a quick comparison.
\begin{definition}[Sobolev space via plans]\label{def:W1p}
Let \((\X,\sfd,\mu)\) be a metric measure space and \(p\in(1,\infty)\). Then we declare that \(f\in L^p(\mu)\) belongs to \(W^{1,p}(\X,\mu)\) if there exists \(G\in L^p(\mu)^+\) such that
\[
\int f\,\d\partial\ppi\leq\int G\,{\sf Bar}(\ppi)\,\d\mu\quad\text{ for every }\ppi\in\mathcal B_q(\X,\mu).
\]
The \(\mu\)-a.e.\ minimal such \(G\) is called the \emph{minimal \(p\)-weak upper gradient} \(|Df|_{p,w}\in L^p(\mu)^+\) of \(f\).
\end{definition}

One can readily deduce from the definitions that \(H^{1,p}(\X,\mu)\subseteq W^{1,p}(\X,\mu)\) and that
\begin{equation}\label{eq:H_in_W}
|Df|_W\leq|Df|_H\quad\text{ for every }f\in H^{1,p}(\X,\mu).
\end{equation}
Indeed, for every \(f\in H^{1,p}(\X,\mu)\) there exists a sequence \((f_n)_n\subseteq\LIP_{bs}(\X)\) such that
\([f_n]_\mu\rightharpoonup f\) and \([\lip_a(f_n)]_\mu\rightharpoonup|Df|_H\) weakly in \(L^p(\mu)\). Therefore, for any given \(\ppi\in\mathcal B_q(\X,\mu)\) we can let \(n\to\infty\) in
\[
\int f_n\,\d\partial\ppi=\int\!\!\!\int_0^1\frac{\d}{\d t}f_n(\gamma_t)\,\d t\,\d\ppi(\gamma)\leq\int\!\!\!\int_0^1\lip_a(f_n)(\gamma_t)|\dot\gamma_t|\,\d t\,\d\ppi(\gamma)=
\int\lip_a(f_n)\,{\sf Bar}(\ppi)\,\d\mu,
\]
thus obtaining that \(\int f\,\d\partial\ppi\leq\int|Df|_H\,{\sf Bar}(\ppi)\,\d\mu\). This gives \(f\in W^{1,p}(\X,\mu)\) and \(|Df|_W\leq|Df|_H\).
\begin{remark}\label{rmk:compos_Sob}{\rm
Let \((\X,\sfd_\X,\mu_\X)\), \((\Y,\sfd_\Y,\mu_\Y)\) be metric measure spaces. Let \(S\coloneqq{\rm spt}(\mu_\X)\).
We call \(\phi\colon S\to\Y\) a \emph{short map} if it is \(1\)-Lipschitz and \(\phi_\#\mu_\X\leq\mu_\Y\). Define
\(\Phi\colon\LIP([0,1];S)\to\LIP([0,1];\Y)\) as \(\Phi(\gamma)_t\coloneqq\phi(\gamma_t)\)
for every \(\gamma\in\LIP([0,1];S)\) and \(t\in[0,1]\). Then the following claim can be readily checked: given any \(q\in(1,\infty)\) and \(\ppi\in\mathcal B_q(S,\mu_\X)\),
it holds that \(\Phi_\#\ppi\in\mathcal B_q(\Y,\mu_\Y)\) and
\begin{equation}\label{eq:compos_plan}
{\sf Bar}(\Phi_\#\ppi)=\frac{\d\phi_\#({\sf Bar}(\ppi)\mu_\X)}{\d\mu_\Y},\qquad\partial(\Phi_\#\ppi)=\phi_\#(\partial\ppi).
\end{equation}
Moreover, the map \(\phi\) induces via pre-composition a \(1\)-Lipschitz linear map \(\phi^*\colon L^p(\mu_\Y)\to L^p(\mu_\X)\). Using \eqref{eq:compos_plan}, one can easily
show that \(\phi^*W^{1,p}(\Y,\mu_\Y)\subseteq W^{1,p}(\X,\mu_\X)\) and \(|D(\phi^*f)|_W\leq\phi^*|Df|_W\) for every \(f\in W^{1,p}(\Y,\mu_\Y)\). It can also be readily
checked that \(\phi^*H^{1,p}(\Y,\mu_\Y)\subseteq H^{1,p}(\X,\mu_\X)\) and that \(|D(\phi^*f)|_H\leq\phi^*|Df|_H\) for every \(f\in H^{1,p}(\Y,\mu_\Y)\).
\fr}\end{remark}

The following technical statement is a direct consequence of Lemma \ref{lem:wlog_full_spt}.
\begin{corollary}\label{cor:W1p_inv}
Let \((\X,\sfd,\mu)\) be a metric measure space and let \(p\in(1,\infty)\). Let \(C\subseteq\X\setminus{\rm spt}(\mu)\)
be a countable set. Let \(\tilde\mu\geq 0\) be a boundedly-finite Borel measure on \(\X\) concentrated on \({\rm spt}(\mu)\cup C\)
such that \(\tilde\mu|_{{\rm spt}(\mu)}=\mu\). Then it holds that \(W^{1,p}(\X,\tilde\mu)=\big\{f\in L^p(\tilde\mu):[f]_\mu\in W^{1,p}(\X,\mu)\big\}\) and
\[
|Df|_W={\rm ext}_{\tilde\mu}(|D[f]_\mu|_W)\quad\text{ for every }f\in W^{1,p}(\X,\tilde\mu).
\]
\end{corollary}
\begin{proof}
On the one hand, taking \(\phi\coloneqq{\rm id}_\X\colon(\X,\tilde\mu)\to(\X,\mu)\) in Remark \ref{rmk:compos_Sob} we get \([f]_\mu\in W^{1,p}(\X,\mu)\)
and \(|D[f]_\mu|_W\leq[|Df|_W]_\mu\) for all \(f\in W^{1,p}(\X,\tilde\mu)\). Conversely, if \(f\in L^p(\tilde\mu)\) and \([f]_\mu\in W^{1,p}(\X,\mu)\),
then for every \(\tilde\ppi\in\mathcal B_q(\X,\tilde\mu)\) we deduce from Lemma \ref{lem:wlog_full_spt} that \(\ppi\coloneqq\tilde\ppi|_{\LIP([0,1];{\rm spt}(\mu))}\in\mathcal B_q(\X,\mu)\) and
\[
\int f\,\d\partial\tilde\ppi=\int[f]_\mu\,\d\partial\ppi\leq\int|D[f]_\mu|_W\,{\sf Bar}(\ppi)\,\d\mu=\int{\rm ext}_{\tilde\mu}\big(|D[f]_\mu|_W\big){\sf Bar}(\tilde\ppi)\,\d\tilde\mu,
\]
which implies that \(f\in W^{1,p}(\X,\tilde\mu)\) and \(|Df|_W\leq{\rm ext}_{\tilde\mu}(|D[f]_\mu|_W)\). The statement follows.
\end{proof}
\begin{remark}\label{rmk:wlog_fin_meas}{\rm
Let \((\X,\sfd,\mu)\) be a metric measure space and \(p\in(1,\infty)\). Let \((\Omega_n)_n\) be a sequence of open sets in \(\X\) such that \(\Omega_n\subseteq\Omega_{n+1}\)
for all \(n\in\N\) and \(\X=\bigcup_{n\in\N}\Omega_n\). Let \(f\in L^p(\mu)\) be such that \(f_n\in H^{1,p}(\X,\mu|_{\Omega_n})\) for every \(n\in\N\) and
\(s\coloneqq\sup_n\int|Df_n|_H^p\,\d\mu|_{\Omega_n}<+\infty\), where \(f_n\coloneqq[f]_{\mu|_{\Omega_n}}\). Then it holds that \(f\in H^{1,p}(\X,\mu)\) and \(\int|Df|_H^p\,\d\mu=s\).
This property can be proved by combining the locality of minimal relaxed slopes with a cut-off argument, see e.g.\ \cite[Proposition 2.17]{Cheeger00}.
\fr}\end{remark}
\section{Main results}
Let \(\B\) be a separable Banach space, \(\mu\in\mathfrak M_+(\B)\) a measure satisfying \({\rm spt}(\mu)=\B\), and \(p\in[1,\infty)\).
In view of Remark \ref{rmk:proj_inject}, we can identify \(\Cyl(\B)\) with a subspace of \(L^p(\mu)\), thus the Fr\'{e}chet differential induces an unbounded
linear operator \(\d\colon L^p(\mu)\to L^p(\mu;\B^*)\) with domain \(D(\d)=\Cyl(\B)\). Since \(\d\colon L^p(\mu)\to L^p(\mu;\B^*)\) is densely defined,
its adjoint operator \(\d^*\colon L^p(\mu;\B^*)^*\to L^q(\mu)\) is well-posed.
\begin{proposition}\label{prop:L_to_plan}
Let \(\B\) be a separable Banach space. Let \(\mu\in\mathfrak M_+(\B)\) be such that \({\rm spt}(\mu)=\B\). Let \(p\in[1,\infty)\) and \(L\in D(\d^*)\) be given.
Then there exists a plan \(\ppi\in\mathcal B_q(\B,\mu)\) such that
\[
\partial\ppi=(\d^*L)\mu,\qquad\|{\sf Bar}(\ppi)\|_{L^q(\mu)}\leq\|L\|_{L^p(\mu;\B^*)^*}.
\]
\end{proposition}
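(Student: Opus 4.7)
The plan is to combine the finite-dimensional superposition principle (Theorem \ref{thm:Smirnov}) with the \(\mu\)-MAP: we reduce the construction to finite-dimensional problems, apply Smirnov on each, and take a limit. Let \((p_n)_n\) denote the sequence of finite-rank \(1\)-Lipschitz maps from Definition \ref{def:mu-MAP}, and set \(\V_n\coloneqq p_n(\B)\), \(\mu_n\coloneqq(p_n)_\#\mu\). For each \(n\) we define the \(1\)-current \(T_n\in{\bf M}_1(\V_n)\) by \(T_n(\omega)\coloneqq L([p_n^*\omega]_\mu)\) for \(\omega\in C^\infty_c(\V_n;\V_n^*)\). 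The pullback bound \eqref{eq:prop_pullback_1-forms} combined with \(\mu_n=(p_n)_\#\mu\) gives \(|T_n(\omega)|\leq\|L\|_{L^p(\mu;\B^*)^*}\|\omega\|_{L^p(\mu_n;\V_n^*)}\), whence a standard duality argument (testing with the density to the power \(q-1\)) shows that \(\|T_n\|\) is absolutely continuous with respect to \(\mu_n\) and its density \(g_n\) belongs to \(L^q(\mu_n)\) with \(\|g_n\|_{L^q(\mu_n)}\leq\|L\|_{L^p(\mu;\B^*)^*}\). The chain rule \(\d(f\circ p_n)=p_n^*\d f\) together with \(f\circ p_n\in\Cyl(\B)\) yields \(\partial T_n=(p_n)_\#((\d^*L)\mu)=:\nu_n\), a finite signed Borel measure on \(\V_n\); in particular \(T_n\) is normal.

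Writing \(T_n=T_n^a+C_n\) with \(T_n^a\) acyclic and \(C_n\) a cycle, we have \(\partial T_n^a=\nu_n\) and \(\|T_n^a\|\leq\|T_n\|\). Applying Theorem \ref{thm:Smirnov} to \(T_n^a\) produces a plan \(\ppi_n\in\mathfrak{M}_+(C([0,1];\V_n))\), concentrated on non-constant constant-speed Lipschitz curves, with \((\e_\pm)_\#\ppi_n=\nu_n^\pm\) and length measure \(\|\ppi_n\|=\|T_n^a\|\). Viewed as a plan on \(\B\) via the inclusion \(\V_n\hookrightarrow\B\), it satisfies \(\partial\ppi_n=\nu_n\) and \(\|{\sf Bar}(\ppi_n)\|_{L^q(\mu_n)}\leq\|L\|_{L^p(\mu;\B^*)^*}\).

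Next we extract a weak subsequential limit \(\ppi\) of \(\{\ppi_n\}\) in \(\mathfrak{M}_+(C([0,1];\B))\). This is the main technical obstacle: since the curves in \(\ppi_n\) live in the varying finite-dimensional subspaces \(\V_n\), pointwise tightness of \((\e_t)_\#\ppi_n\) at intermediate times \(t\in(0,1)\) is not automatic in the infinite-dimensional ambient space \(\B\). The ingredients we assemble are the Chebyshev bound \(\ppi_n(\{\ell(\gamma)>R\})\leq\|L\|_{L^p(\mu;\B^*)^*}\mu(\B)^{1/p}/R\); Egorov applied to the \(\mu\)-MAP, yielding for each \(\eps>0\) a compact \(E_\eps\subseteq\B\) with \(\mu(\B\setminus E_\eps)<\eps\) on which \(p_n\to\mathrm{id}\) uniformly, so that \(\tilde K_\eps\coloneqq E_\eps\cup\bigcup_n p_n(E_\eps)\) is compact; and uniform tightness of \(\nu_n^\pm\leq(p_n)_\#((\d^*L)^\pm\mu)\), deduced from \(p_n^{-1}(\B\setminus\tilde K_\eps)\subseteq\B\setminus E_\eps\) together with absolute continuity of the integral. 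Combining these via an Arzel\`{a}--Ascoli-type argument provides tightness, and we extract a subsequence \(\ppi_{n_k}\to\ppi\) weakly.

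Finally, we verify \(\ppi\in\mathcal{B}_q(\B,\mu)\) with the required properties. By continuity of \(\e_\pm\) and weak convergence, \(\partial\ppi=\lim_k\nu_{n_k}=(\d^*L)\mu\), the last equality following from \(p_n\to\mathrm{id}\) \(\mu\)-a.e.\ and dominated convergence. Since \(\nu_n^\pm\leq(p_n)_\#((\d^*L)^\pm\mu)\rightharpoonup(\d^*L)^\pm\mu\), weak lower semicontinuity gives \((\e_\pm)_\#\ppi\leq(\d^*L)^\pm\mu\), so \((\e_\pm)_\#\ppi\ll\mu\) with \(L^q\)-densities. For the barycenter bound, set \(h_n\coloneqq{\sf Bar}(\ppi_n)\circ p_n\in L^q(\mu)\); then \(\|h_n\|_{L^q(\mu)}=\|{\sf Bar}(\ppi_n)\|_{L^q(\mu_n)}\leq\|L\|_{L^p(\mu;\B^*)^*}\), so up to a subsequence \(h_{n_k}\rightharpoonup h\) weakly in \(L^q(\mu)\) with \(\|h\|_{L^q(\mu)}\leq\|L\|_{L^p(\mu;\B^*)^*}\). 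For \(f\in C_b(\B)\) with \(f\geq 0\), \(f\circ p_n\to f\) strongly in \(L^p(\mu)\) by dominated convergence, so \(\int(f\circ p_n)h_n\,\d\mu\to\int f h\,\d\mu\); combined with the weak lower semicontinuity \eqref{eq:lsc_int_plan} this yields \(\int f\,{\sf Bar}(\ppi)\,\d\mu\leq\int f h\,\d\mu\). Hence \({\sf Bar}(\ppi)\leq h\) \(\mu\)-a.e., and therefore \(\|{\sf Bar}(\ppi)\|_{L^q(\mu)}\leq\|L\|_{L^p(\mu;\B^*)^*}\).
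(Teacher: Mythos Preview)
Your argument is correct and follows the same overall strategy as the paper: push \(L\) forward through the \(\mu\)-MAP projections \(p_n\) to obtain normal \(1\)-currents \(T_n\) on \(\V_n\), remove cycles, apply Smirnov to get plans \(\ppi_n\), prove tightness, and pass to the limit. The one noteworthy technical difference is that the paper invokes Gigli's module-theoretic representation to obtain a single pointwise density \(|L|\in L^q(\mu)\) with \(\|T_n\|\leq(p_n)_\#(|L|\mu)\), which it then uses both for tightness and for the barycenter bound in the limit; you bypass this by observing directly (via finite-dimensional \(L^p\)--\(L^q\) duality on each \(\V_n\)) that \(\|T_n\|=g_n\mu_n\) with \(\|g_n\|_{L^q(\mu_n)}\leq\|L\|\), and then handle the barycenter bound by extracting a weak \(L^q(\mu)\) limit of \(h_n={\sf Bar}(\ppi_n)\circ p_n\). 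This is slightly more elementary, at the cost of a less explicit limiting object.

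Two small points to clean up. First, Egorov alone gives a measurable set of near-full measure with uniform convergence; to get a \emph{compact} \(E_\eps\) you must also invoke inner regularity of \(\mu\) (Ulam). Second, \(\tilde K_\eps=E_\eps\cup\bigcup_n p_n(E_\eps)\) is totally bounded (since \(p_n\to{\rm id}\) uniformly on \(E_\eps\)) but need not be closed; replace it by its closure. Finally, your tightness step (``combining these via an Arzel\`{a}--Ascoli-type argument'') is only sketched: the required argument is exactly the Paolini--Stepanov construction that the paper spells out in detail, namely controlling \(\ppi_n\big(\{\gamma:\mathcal L_1(\gamma^{-1}(\B\setminus K))>\delta\}\big)\) via the length-measure bound \(\|\ppi_n\|(\B\setminus K)\leq\|g_n\|_{L^q(\mu_n)}\mu_n(\B\setminus K)^{1/p}\), together with the endpoint tightness and the Chebyshev bound on \(\ell(\gamma)\). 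All the ingredients you list are indeed sufficient for this.
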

\begin{proof}
By \cite[Proposition 1.2.13]{Gigli14}, there exists a unique \(L^\infty(\mu)\)-linear map \(\ell\colon L^p(\mu;\B^*)\to L^1(\mu)\)
such that \(L(\omega)=\int\ell(\omega)\,\d\mu\) and \(|\ell(\omega)|\leq|L|\|\omega(\cdot)\|_{\B^*}\) in the \(\mu\)-a.e.\ sense for every \(\omega\in L^p(\mu;\B^*)\),
for some \(|L|\in L^q(\mu)^+\) satisfying \(\||L|\|_{L^q(\mu)}=\|L\|_{L^p(\mu;\B^*)^*}\). Since \(\B\) can be embedded linearly and isometrically into \(\ell^\infty\)
via a Kuratowski embedding and \(\ell^\infty\) has the metric approximation property (see e.g.\ \cite[Lemma 5.7]{PaolStep12}), we have that \(\B\) is the subspace of a
Banach space \(\tilde\B\) having the metric approximation property. Given that \(\mu\) is concentrated on a \(\sigma\)-compact set, we can find a sequence \((\tilde p_n)_n\)
of finite-rank \(1\)-Lipschitz linear operators \(\tilde p_n\colon\tilde\B\to\tilde\B\) such that \(\lim_n\|\tilde p_n(x)-x\|_{\tilde\B}=0\) holds for \(\mu\)-a.e.\ \(x\in\tilde\B\).
Now let us fix a separable closed subspace \(\hat\B\) of \(\tilde\B\) containing \(\B\cup\bigcup_{n\in\N}\tilde p_n(\B)\). For any \(n\in\N\), we denote by
\(\mathbb V_n\) the finite-dimensional Banach space \(\tilde p_n(\B)\subseteq\hat\B\) and we define the operator \(p_n\colon\B\to\V_n\) as \(p_n\coloneqq\tilde p_n|_\B\).
We define the \(1\)-current \(T_n\in{\bf M}_1(\V_n)\) as \(T_n(\omega)\coloneqq L([p_n^*\omega]_\mu)\) for every \(\omega\in C^\infty_c(\V_n;\V_n^*)\), where \(p_n^*\omega\)
is given by \eqref{eq:def_pullback_1forms}. We claim that \(T_n\in{\bf N}_1(\V_n)\) and
\begin{equation}\label{eq:L_to_plan_aux1}
\|T_n\|\leq(p_n)_\#(|L|\mu),\qquad\partial T_n=(p_n)_\#((\d^*L)\mu).
\end{equation}
To prove the first property in \eqref{eq:L_to_plan_aux1}, fix any open set \(\Omega\subseteq\V_n\) and an element \(\omega\in C^\infty_c(\V_n;\V_n^*)\) satisfying \({\rm spt}(\omega)\subseteq\Omega\)
and \(\|\omega(x)\|_{\V_n^*}\leq 1\) for every \(x\in\V_n\). Recalling \eqref{eq:prop_pullback_1-forms}, we can estimate
\[
|T_n(\omega)|\leq\int|\ell([p_n^*\omega]_\mu)|\,\d\mu\leq\int_{p_n^{-1}(\Omega)}|L|(x)\|\omega(p_n(x))\|_{\V_n^*}\,\d\mu(x)\leq(p_n)_\#(|L|\mu)(\Omega),
\]
whence it follows that \(\|T_n\|(\Omega)\leq(p_n)_\#(|L|\mu)(\Omega)\) and thus \(\|T_n\|\leq(p_n)_\#(|L|\mu)\). To prove the second property in \eqref{eq:L_to_plan_aux1},
notice that for every given function \(f\in C^\infty_c(\V_n)\) we can compute
\[
\partial T_n(f)=T_n(\d f)=L([p_n^*\d f]_\mu)=L(\d(f\circ p_n))=\int f\circ p_n\,\d^*L\,\d\mu=\int f\,\d(p_n)_\#((\d^*L)\mu),
\]
whence it follows that \(T_n\) is normal and \(\partial T_n=(p_n)_\#((\d^*L)\mu)\). All in all, the claim \eqref{eq:L_to_plan_aux1} is proved.
Since \((p_n)_\#(|L|\mu)\rightharpoonup|L|\mu\) and \((p_n)_\#(|\d^*L|\mu)\rightharpoonup|\d^*L|\mu\) weakly in \(\mathfrak M(\hat\B)\) by dominated
convergence theorem, Prokhorov's theorem gives that \((\|T_n\|)_n,(|\partial T_n|)_n\subseteq\mathfrak M_+(\hat\B)\)
are tight sequences.

For any \(n\in\N\), we choose a cycle \(C_n\) of \(T_n\) such that \(\tilde T_n\coloneqq T_n-C_n\in{\bf N}_1(\V_n)\)
is acyclic. Using Theorem \ref{thm:Smirnov}, we obtain a plan \(\ppi_n\in\mathfrak M_+(C([0,1];\hat\B))\),
concentrated on the set \(\Gamma\) of non-constant Lipschitz curves in \(\V_n\) of constant speed, such that
\(\|\tilde T_n\|=\|\ppi_n\|\) and \((\e_\pm)_\#\ppi_n=(\partial\tilde T_n)^\pm\). Now we follow the proof
of \cite[Lemma 4.11]{PaolStep12}. Since \(\|\ppi_n\|\leq\|T_n\|\) and \((\e_\pm)_\#\ppi_n\leq|\partial T_n|\)
for every \(n\in\N\), the sequences \((\|\ppi_n\|)_n,((\e_\pm)_\#\ppi_n)_n\subseteq\mathfrak M_+(\hat\B)\) are tight,
thus we can find compact subsets \((K_j)_j\) of \(\hat\B\) such that
\(\|\ppi_n\|(\hat\B\setminus K_j)\leq 4^{-j}\) and \(((\e_+)_\#\ppi_n)(\hat\B\setminus K_j)\leq 2^{-j}\) for all \(j,n\in\N\). We also define
\[
\Gamma_j\coloneqq\{\gamma\in\Gamma\;|\;\ell(\gamma)\leq 2^j\}\cap\e_+^{-1}(K_j)\cap\bigcap_{k>j}\tilde\Gamma_k,
\]
where \(\tilde\Gamma_k\coloneqq\{\gamma\in\Gamma\,:\,\mathcal L_1(\gamma^{-1}(\hat\B\setminus K_k))\leq 2^{-k}/\ell(\gamma)\}\).
Since \(2^{-k}\ppi_n(\Gamma\setminus\tilde\Gamma_k)\leq\|\ppi_n\|(\hat\B\setminus K_k)\leq 4^{-k}\) and
\(2^j\ppi_n(\{\gamma\in\Gamma\,:\,\ell(\gamma)>2^j\})\leq\|\ppi_n\|(\hat\B)\leq\int|L|\,\d\mu\eqqcolon m\) for every \(n\in\N\), we deduce that
\begin{equation}\label{eq:L_to_plan_aux2}
\ppi_n(\Gamma\setminus\Gamma_j)\leq\frac{m}{2^j}+((\e_+)_\#\ppi_n)(\hat\B\setminus K_j)+\sum_{k>j}\ppi_n(\Gamma\setminus\tilde\Gamma_k)
\leq\frac{m}{2^j}+\frac{1}{2^j}+\sum_{k>j}\frac{1}{2^k}=\frac{m+2}{2^j}
\end{equation}
for all \(j,n\in\N\). We now show that \(\Gamma_j\) is a precompact subset of \(C([0,1];\hat\B)\). Fix \((\gamma^i)_i\subseteq\Gamma_j\). Then:
\begin{itemize}
\item Suppose \(\lim_i\ell(\gamma^i)=0\). Since \(((\gamma^i)_1)_i\subseteq K_j\) and \(K_j\) is compact, \(x\coloneqq\lim_i(\gamma^i)_1\in K_j\) exists, up to subsequence.
Hence, \((\gamma^i)_i\) converges uniformly to the curve constantly in \(x\).
\item Suppose \(\varlimsup_i\ell(\gamma^i)>0\), so that \(c\coloneqq\inf_i\ell(\gamma^i)>0\) up to subsequence. Since \(\Lip(\gamma^i)\leq 2^j\) and
\(\mathcal L_1((\gamma^i)^{-1}(\hat\B\setminus K_k))\leq 2^{-k}/c\) for every \(i\in\N\) and \(k>j\), the sequence \((\gamma^i)_i\) has a uniformly converging
subsequence by the Arzel\`{a}--Ascoli theorem \cite[Proposition 2.1]{PaolStep12}.
\end{itemize}
All in all, we proved that each \(\Gamma_j\) is precompact, thus \eqref{eq:L_to_plan_aux2} implies that \((\ppi_n)_n\subseteq\mathfrak M_+(C([0,1];\hat\B))\)
is a tight sequence. Since \(\ppi_n(C([0,1];\hat\B))\leq|\partial T_n|(\hat\B)\leq\int|\d^*L|\,\d\mu\) for every \(n\in\N\), we know from Prokhorov's theorem that
\(\ppi_n\rightharpoonup\hat\ppi\) for some \(\hat\ppi\in\mathfrak M_+(C([0,1];\hat\B))\), up to subsequence. Note that
\[\begin{split}
\bigg|\int\!\!\!\int_0^1 f(\gamma_t)|\dot\gamma_t|\,\d t\,\d\hat\ppi(\gamma)\bigg|&\leq\varliminf_n\int\!\!\!\int_0^1|f|(\gamma_t)\|\dot\gamma_t\|_{\V_n}\,\d t\,\d\ppi_n(\gamma)=\varliminf_n\int|f|\,\d\|\ppi_n\|\\
&\leq\lim_n\int|f|\circ p_n\,|L|\,\d\mu=\int|f||L|\,\d\mu\leq\|f\|_{L^p(\mu)}\|L\|_{L^p(\mu;\B^*)^*}
\end{split}\]
for every \(f\in C_{bs}(\hat\B)\) thanks to \eqref{eq:lsc_int_plan}, \eqref{eq:L_to_plan_aux1}, and the dominated convergence theorem. This implies that \(\hat\ppi\) has barycenter in \(L^q(\mu)\) and
\(\|{\sf Bar}(\hat\ppi)\|_{L^q(\mu)}\leq\|L\|_{L^p(\mu;\B^*)^*}\). Given that \((\e_\pm)_\#\ppi_n\rightharpoonup(\e_\pm)_\#\hat\ppi\) and \((\e_\pm)_\#\ppi_n\leq(p_n)_\#(|\d^*L|\mu)\rightharpoonup|\d^*L|\mu\),
we have that \((\e_\pm)_\#\hat\ppi\ll\mu\) and \(\frac{\d(\e_\pm)_\#\hat\ppi}{\d\mu}\leq|\d^*L|\in L^q(\mu)\). Therefore, it holds \(\hat\ppi\in\mathcal B_q(\hat\B,\mu)\). Also,
\(\partial\ppi_n=(\e_+)_\#\ppi_n-(\e_-)_\#\ppi_n\rightharpoonup(\e_+)_\#\hat\ppi-(\e_-)_\#\hat\ppi=\partial\hat\ppi\) and
\(\partial\ppi_n=\partial\tilde T_n=\partial T_n=(p_n)_\#((\d^*L)\mu)\rightharpoonup(\d^*L)\mu\), so that \(\partial\hat\ppi=(\d^*L)\mu\). Thanks to the last part of Lemma \ref{lem:wlog_full_spt},
we then conclude that \(\ppi\coloneqq\hat\ppi|_{\LIP([0,1];\B)}\in\mathcal B_q(\B,\mu)\) verifies the statement.
\end{proof}
\begin{remark}\label{rem:about_currents}{\rm
Alternatively, in the proof of Proposition \ref{prop:L_to_plan} we could have used metric \(1\)-currents in the sense of Ambrosio--Kirchheim \cite{AmbrosioKirchheim00}
and Paolini--Stepanov's metric version of the superposition principle \cite{PaolStep12,PaolStep13}.
We opted for the proof we presented for two reasons: first, the current \(T\) we want to associate to \(L\) is defined on cylindrical functions, but it is not obvious
how to extend it to Lipschitz functions; second, extending \(T\) to Lipschitz functions is de facto unnecessary.
\fr}\end{remark}

With Proposition \ref{prop:L_to_plan} at disposal, we prove the equivalence result for weighted Banach spaces:
\begin{theorem}[Equivalence of Sobolev spaces on weighted Banach spaces]\label{thm:equiv_Ban}
Let \(\B\) be a separable Banach space, \(\mu\in\mathfrak M_+(\B)\), and \(p\in(1,\infty)\). Then it holds that \(H^{1,p}_{cyl}(\B,\mu)=W^{1,p}(\B,\mu)\) and
\[
|Df|_W=|Df|_{H,cyl}\quad\text{ for every }f\in W^{1,p}(\B,\mu).
\]
\end{theorem}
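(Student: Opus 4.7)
The plan is to reduce the theorem to Proposition \ref{prop:L_to_plan} through Fenchel--Moreau duality, in the spirit of \cite{BBS97}. After a preliminary reduction to the \(\mu\)-MAP setting and to a single integral inequality, the convex-analytic dual of the Cheeger energy is paired with the plans produced by Proposition \ref{prop:L_to_plan}, and then H\"{o}lder plus Young inequalities close the argument.

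First, Lemma \ref{lem:embed_mu-MAP} and the invariance under isometric embeddings recorded in Remarks \ref{rmk:Sobolev_invar_isom} and \ref{rmk:partial_invar} (together with the elementary fact that precomposition with a linear isometric embedding sends cylindrical functions to cylindrical functions and does not increase the pointwise slope \(\|\d\cdot\|_{\B^\ast}\)) allow me to assume that \(\B\) has the \(\mu\)-MAP. Next, using \eqref{eq:Hcyl_in_H} and \eqref{eq:H_in_W}, the whole theorem reduces to the integral inequality
\[
\int|Df|_{p,cyl}^p\,\d\mu\leq\int|Df|_{p,w}^p\,\d\mu\qquad\text{for every }f\in W^{1,p}(\B,\mu),
\]
since this both forces \(f\in H^{1,p}_{cyl}(\B,\mu)\) and, combined with the \(\mu\)-a.e.\ bound \(|Df|_{p,w}\leq|Df|_{p,cyl}\), upgrades to pointwise equality of the gradients.

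Consider the convex functional \(\mathcal F\colon L^p(\mu)\to[0,+\infty]\) given by \(\mathcal F(f)\coloneqq\tfrac{1}{p}\int\|\d_x f\|_{\B^\ast}^p\,\d\mu\) for \(f\in\Cyl(\B)\) and \(+\infty\) otherwise. Since \(L^p(\mu)\) is reflexive and \(\mathcal F\) is convex, its weak lower semicontinuous envelope coincides with its biconjugate \(\mathcal F^{\ast\ast}\), and by definition of \(\Ch_{p,cyl}\) this envelope is precisely \(\Ch_{p,cyl}\). The target inequality thus reads \(\mathcal F^{\ast\ast}(f)\leq\tfrac{1}{p}\int|Df|_{p,w}^p\,\d\mu\). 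Writing \(\mathcal F(f)=\tfrac{1}{p}\|\d f\|_{L^p(\mu;\B^\ast)}^p\) on \(D(\d)=\Cyl(\B)\) and applying the classical infimal-convolution formula for the conjugate of a composition with a densely defined linear operator (essentially from \cite{BBS97}), I obtain
\[
\mathcal F^\ast(g)=\inf\Big\{\tfrac{1}{q}\|L\|_{L^p(\mu;\B^\ast)^\ast}^q\;\Big|\;L\in D(\d^\ast),\ \d^\ast L=g\Big\},
\]
so that
\[
\mathcal F^{\ast\ast}(f)=\sup_{L\in D(\d^\ast)}\Big\{\int f\,(\d^\ast L)\,\d\mu-\tfrac{1}{q}\|L\|^q\Big\}.
\]

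Now fix \(f\in W^{1,p}(\B,\mu)\) and \(L\in D(\d^\ast)\). Proposition \ref{prop:L_to_plan} produces a plan \(\ppi\in\mathcal B_q(\B,\mu)\) with \(\partial\ppi=(\d^\ast L)\mu\) and \(\|{\sf Bar}(\ppi)\|_{L^q(\mu)}\leq\|L\|\). Testing the definition of \(W^{1,p}\) against \(\ppi\) and applying H\"{o}lder and Young:
\[
\int f\,(\d^\ast L)\,\d\mu=\int f\,\d\partial\ppi\leq\int|Df|_{p,w}\,{\sf Bar}(\ppi)\,\d\mu\leq\tfrac{1}{p}\||Df|_{p,w}\|_{L^p(\mu)}^p+\tfrac{1}{q}\|L\|^q,
\]
so \(\int f\,(\d^\ast L)\,\d\mu-\tfrac{1}{q}\|L\|^q\leq\tfrac{1}{p}\||Df|_{p,w}\|^p\); taking the supremum over \(L\) closes the argument. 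The most delicate ingredient is the Fenchel conjugate identity for \(\mathcal F^\ast\): one must verify the Rockafellar-type hypotheses ensuring that the infimum is attained despite \(\d\) being unbounded, with density of \(\Cyl(\B)\) in \(L^p(\mu)\) and continuity of \(\tfrac{1}{p}\|\cdot\|^p\) on \(L^p(\mu;\B^\ast)\) as the central ingredients. Once this classical tool is in place, Proposition \ref{prop:L_to_plan}—which carries all the geometric content—makes the rest essentially mechanical.
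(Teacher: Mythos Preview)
Your proposal is correct and matches the paper's proof almost exactly: reduction to the \(\mu\)-MAP case via Lemma \ref{lem:embed_mu-MAP} and Remarks \ref{rmk:Sobolev_invar_isom}, \ref{rmk:partial_invar}; Fenchel--Moreau duality on the cylindrical energy with the conjugate formula from \cite{BBS97}; Proposition \ref{prop:L_to_plan} to convert each \(L\in D(\d^*)\) into a plan; and Young's inequality to close. The only cosmetic difference is that the paper defines \(\mathcal F(f)\) as an infimum over cylindrical \emph{representatives} of the \(L^p\)-class \(f\) (to handle measures that are not fully supported, where distinct cylindrical functions may agree \(\mu\)-a.e.), whereas you tacitly identify \(\Cyl(\B)\) with a subset of \(L^p(\mu)\).
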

\begin{proof}
If \({\rm spt}(\mu)=\B\), define \(\tilde\mu\coloneqq\mu\). Otherwise, fix any dense sequence \((x_n)_n\) in \(\B\setminus{\rm spt}(\mu)\) and call
\(\tilde\mu\coloneqq\mu+\sum_n 2^{-n}\delta_{x_n}\in\mathfrak M_+(\B)\). By Remark \ref{rmk:proj_inject}, it makes sense to define \(\mathcal F\colon L^p(\tilde\mu)\to[0,+\infty]\) as
\[
\mathcal F(f)\coloneqq\frac{1}{p}\int\|\d_x f\|_{\B^*}^p\,\d\tilde\mu(x)\quad\text{ if }f\in\Cyl(\B)\subseteq L^p(\tilde\mu)
\]
and \(\mathcal F(f)\coloneqq+\infty\) otherwise. Consider its Fenchel conjugate and its double Fenchel conjugate, i.e.
\[
\mathcal F^*(g)\coloneqq\sup_{\tilde h\in L^p(\tilde\mu)}\int g\tilde h\,\d\tilde\mu-\mathcal F(\tilde h),\qquad
\mathcal F^{**}(f)\coloneqq\sup_{\tilde g\in L^q(\tilde\mu)}\int f\tilde g\,\d\tilde\mu-\mathcal F^*(\tilde g)
\]
for every \(g\in L^q(\tilde\mu)\) and \(f\in L^p(\tilde\mu)\). Since \(\mathcal F\) is convex, we know e.g.\ from \cite[Theorem 5]{Rockafellar74} that \(\mathcal F^{**}\) coincides
with the weak lower semicontinuous envelope \({\rm sc}^-\mathcal F\colon L^p(\tilde\mu)\to[0,+\infty]\) of \(\mathcal F\). Moreover, letting \(\phi\colon L^p(\tilde\mu;\B^*)\to[0,+\infty)\)
be given by \(\phi\coloneqq\frac{1}{p}\|\cdot\|_{L^p(\tilde\mu;\B^*)}^p\), we have that \(\mathcal F=\phi\circ\d\) and thus
\[
\mathcal F^*(g)=\inf\bigg\{\frac{1}{q}\|L\|_{L^p(\tilde\mu;\B^*)^*}^q\;\bigg|\;L\in D(\d^*),\,\d^*L=g\bigg\}\quad\text{ for every }g\in L^q(\tilde\mu)
\]
thanks to \cite[Theorem 5.1]{BBS97}. Indeed, \(\phi\) is convex and continuous, and its Fenchel conjugate is given by \(\phi^*=\frac{1}{q}\|\cdot\|_{L^p(\tilde\mu;\B^*)^*}^q\).
Combining this with Proposition \ref{prop:L_to_plan} and Young's inequality, we obtain
\[\begin{split}
\Ch_{cyl}(\tilde f)&={\rm sc}^-\mathcal F(\tilde f)=\sup_{g\in L^q(\tilde\mu)}\bigg(\int\tilde f g\,\d\tilde\mu-\inf_{\substack{L\in D(\d^*): \\ \d^*L=g}}\frac{1}{q}\|L\|_{L^p(\tilde\mu;\B^*)^*}^q\bigg)\\
&\leq\sup_{g\in L^q(\tilde\mu)}\bigg(\int\tilde f g\,\d\tilde\mu-\inf_{\substack{\ppi\in\mathcal B_q(\B,\tilde\mu): \\ \partial\ppi=g\tilde\mu}}\frac{1}{q}\|{\sf Bar}(\ppi)\|_{L^q(\tilde\mu)}^q\bigg)\\
&=\sup_{\ppi\in\mathcal B_q(\B,\tilde\mu)}\bigg(\int\tilde f\,\d\partial\ppi-\frac{1}{q}\|{\sf Bar}(\ppi)\|_{L^q(\tilde\mu)}^q\bigg)\\
&\leq\sup_{\ppi\in\mathcal B_q(\B,\tilde\mu)}\bigg(\int|D\tilde f|_W\,{\sf Bar}(\ppi)\,\d\tilde\mu-\frac{1}{q}\|{\sf Bar}(\ppi)\|_{L^q(\tilde\mu)}^q\bigg)\\
&\leq\frac{1}{p}\int|D\tilde f|_W^p\,\d\tilde\mu\quad\text{ for every }\tilde f\in W^{1,p}(\B,\tilde\mu).
\end{split}\]
This gives that \(W^{1,p}(\B,\tilde\mu)\subseteq H^{1,p}_{cyl}(\B,\tilde\mu)\) and \(\int|D\tilde f|_{H,cyl}^p\,\d\tilde\mu\leq\int|D\tilde f|_W^p\,\d\tilde\mu\)
for all \(\tilde f\in W^{1,p}(\B,\tilde\mu)\). 

Now fix any function \(f\in W^{1,p}(\B,\mu)\) and define \(\tilde f\coloneqq{\rm ext}_{\tilde\mu}(f)\in L^p(\tilde\mu)\). The first part of the proof, Corollary \ref{cor:W1p_inv}, and Remark \ref{rmk:compos_Sob}
ensure that \(\tilde f\in W^{1,p}(\B,\tilde\mu)\), as well as \(f\in H^{1,p}_{cyl}(\B,\mu)\) and
\[
\int|Df|_{H,cyl}^p\,\d\mu\leq\int|D\tilde f|^p_{H,cyl}\,\d\tilde\mu\leq\int|D\tilde f|^p_W\,\d\tilde\mu=\int|Df|^p_W\,\d\mu.
\]
Since \(|Df|_W\leq|Df|_{H,cyl}\) by \eqref{eq:Hcyl_in_H}, \eqref{eq:H_in_W} and \(H^{1,p}_{cyl}(\B,\mu)\subseteq W^{1,p}(\B,\mu)\), the statement follows.
\end{proof}

The equivalence result for arbitrary metric measure spaces easily follows:
\begin{theorem}[Equivalence of metric Sobolev spaces]\label{thm:equiv_metric_Sob}
Let \((\X,\sfd,\mu)\) be a metric measure space and \(p\in(1,\infty)\). Then it holds that \(H^{1,p}(\X,\mu)=W^{1,p}(\X,\mu)\) and
\[
|Df|_W=|Df|_H\quad\text{ for every }f\in W^{1,p}(\X,\mu).
\]
\end{theorem}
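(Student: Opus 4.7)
The plan is to reduce the claim to the Banach-space case already handled in Theorem \ref{thm:equiv_Ban} by combining a localisation argument with an isometric embedding. Since \eqref{eq:H_in_W} gives $H^{1,p}(\X,\mu)\subseteq W^{1,p}(\X,\mu)$ and $|Df|_{p,w}\leq|Df|_p$ for free, only the reverse inclusion with the matching inequality $|Df|_p\leq|Df|_{p,w}$ needs to be established.

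First, I would reduce to the case of a finite measure. Since $(\X,\sfd)$ is complete separable and $\mu$ is locally finite, I can choose a countable open cover $(\Omega_n)_n$ of $\X$ with $\mu(\Omega_n)<+\infty$ for every $n$. Given $f\in W^{1,p}(\X,\mu)$, the $W^{1,p}$-analogue of Remark \ref{rmk:wlog_fin_meas} yields $[f]_{\Omega_n}\in W^{1,p}(\X,\mu|_{\Omega_n})$ with $|D[f]_{\Omega_n}|_{p,w}=|Df|_{p,w}$ holding $\mu$-a.e.\ on $\Omega_n$. If the statement is known for finite measures, then $[f]_{\Omega_n}\in H^{1,p}(\X,\mu|_{\Omega_n})$ with $|D[f]_{\Omega_n}|_p\leq|Df|_{p,w}|_{\Omega_n}$, and the uniform $L^p$-bound
\[
\sup_n\bigl\||D[f]_{\Omega_n}|_p\bigr\|_{L^p(\mu|_{\Omega_n})}\leq\bigl\||Df|_{p,w}\bigr\|_{L^p(\mu)}
\]
lets me apply the converse part of Remark \ref{rmk:wlog_fin_meas} to conclude $f\in H^{1,p}(\X,\mu)$ together with $|Df|_p\leq|Df|_{p,w}$ $\mu$-a.e.

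Having reduced to finite $\mu$, I would embed $(\X,\sfd)$ isometrically into a separable Banach space $\B$ by a Kuratowski-type map $\iota\colon\X\hookrightarrow\B$ (e.g.\ compose the standard embedding into $\ell^\infty$ with the restriction to a separable closed subspace containing the image). Set $\hat\mu\coloneqq\iota_\#\mu\in\mathfrak M_+(\B)$ and let ${\sf i}\colon L^p(\hat\mu)\to L^p(\mu)$ denote the canonical isomorphism of Remark \ref{rmk:Sobolev_invar_isom}, which restricts to a bijection $W^{1,p}(\B,\hat\mu)\to W^{1,p}(\X,\mu)$ satisfying ${\sf i}|Dg|_{p,w}=|D({\sf i}g)|_{p,w}$.

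For $f\in W^{1,p}(\X,\mu)$, put $g\coloneqq{\sf i}^{-1}f\in W^{1,p}(\B,\hat\mu)$. Theorem \ref{thm:equiv_Ban} gives $g\in H^{1,p}_{cyl}(\B,\hat\mu)$ with $|Dg|_{p,cyl}=|Dg|_{p,w}$, and \eqref{eq:Hcyl_in_H} upgrades this to $g\in H^{1,p}(\B,\hat\mu)$ with $|Dg|_p\leq|Dg|_{p,cyl}$. Applying Remark \ref{rmk:partial_invar} to $\iota$ (note $\iota_\#\mu=\hat\mu$) yields $g\circ\iota\in H^{1,p}(\X,\mu)$ with $|D(g\circ\iota)|_p\leq|Dg|_p\circ\iota$ $\mu$-a.e. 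Since $f={\sf i}g=g\circ\iota$ as elements of $L^p(\mu)$, chaining the inequalities produces
\[
|Df|_p\leq|Dg|_p\circ\iota\leq|Dg|_{p,cyl}\circ\iota=|Dg|_{p,w}\circ\iota={\sf i}|Dg|_{p,w}=|Df|_{p,w}
\]
$\mu$-a.e., which combined with \eqref{eq:H_in_W} closes the argument. I do not expect a substantial obstacle: the non-trivial analytic content has been entirely absorbed into Theorem \ref{thm:equiv_Ban}, and the present step only transports that information through the metric-measure invariance remarks and the standard localisation procedure.
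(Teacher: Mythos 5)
Your proposal is correct and follows essentially the same route as the paper: reduce to a finite-measure weighted Banach space via localisation (Remark \ref{rmk:wlog_fin_meas}) and a Kuratowski-type isometric embedding, then invoke Theorem \ref{thm:equiv_Ban} together with \eqref{eq:Hcyl_in_H}, \eqref{eq:H_in_W}, Remark \ref{rmk:Sobolev_invar_isom} and Remark \ref{rmk:partial_invar}. The only (cosmetic) difference is the order of the two reductions --- you localise in $\X$ first and then embed, while the paper embeds into $\B$ first and localises inside $\B$ using its Lindel\"{o}f property --- but both are valid and the chain of inequalities you write is the same as the paper's.
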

\begin{proof}
In view of \eqref{eq:H_in_W}, it suffices to show that \(f\in H^{1,p}(\X,\mu)\) and \(\int|Df|^p_H\,\d\mu\leq\int|Df|^p_W\,\d\mu\) for every fixed \(f\in W^{1,p}(\X,\mu)\).
Fix a linear isometric embedding \(\iota\colon\X\hookrightarrow\B\) into some separable Banach space \(\B\)
and call \(\tilde\mu\coloneqq\iota_\#\mu\). Define \(\tilde\Omega_n\coloneqq B_n(0)\subseteq\B\) and
\(\Omega_n\coloneqq\iota^{-1}(\tilde\Omega_n)\) for every \(n\in\N\). Both
\(\iota_n\coloneqq\iota|_{\Omega_n}\colon({\rm spt}(\mu_n),\mu_n)\to(\B,\tilde\mu_n)\) and
\(\phi_n\coloneqq(\iota|_{{\rm spt}(\mu_n)})^{-1}\colon({\rm spt}(\tilde\mu_n),\tilde\mu_n)\to(\X,\mu_n)\)
are short maps (in the sense of Remark \ref{rmk:compos_Sob}), where we set \(\mu_n\coloneqq\mu|_{\Omega_n}\)
and \(\tilde\mu_n\coloneqq\tilde\mu|_{\tilde\Omega_n}=\iota_\#\mu_n\). Hence:
\begin{itemize}
\item \(f_n\coloneqq[f]_{\mu_n}\in W^{1,p}(\X,\mu_n)\) and \(\int|Df_n|^p_W\,\d\mu_n\leq\int|Df|^p_W\,\d\mu\)
thanks to Remark \ref{rmk:compos_Sob}.
\item \(\tilde f_n\coloneqq\phi_n^*f_n\in W^{1,p}(\B,\tilde\mu_n)\) and
\(\int|D\tilde f_n|^p_W\,\d\tilde\mu_n\leq\int|Df_n|^p_W\,\d\mu_n\) again by Remark \ref{rmk:compos_Sob}.
\item \(\tilde f_n\in H^{1,p}(\B,\tilde\mu_n)\) and
\(\int|D\tilde f_n|^p_H\,\d\tilde\mu_n=\int|D\tilde f_n|^p_W\,\d\tilde\mu_n\) by Theorem \ref{thm:equiv_Ban},
\eqref{eq:Hcyl_in_H}, and \eqref{eq:H_in_W}.
\item \(f_n=\iota_n^*\tilde f_n\in H^{1,p}(\X,\mu_n)\) and
\(\int|Df_n|^p_H\,\d\mu_n\leq\int|D\tilde f_n|^p_H\,\d\tilde\mu_n\) thanks to Remark \ref{rmk:compos_Sob}.
\end{itemize}
All in all, we proved \(f_n\in H^{1,p}(\X,\mu_n)\) and \(\int|Df_n|^p_H\,\d\mu_n\leq\int|Df|^p_W\,\d\mu\)
for all \(n\in\N\). By Remark \ref{rmk:wlog_fin_meas}, we conclude that
\(f\in H^{1,p}(\X,\mu)\) and \(\int|Df|^p_H\,\d\mu=\sup_n\int|Df_n|^p_H\,\d\mu_n\leq\int|Df|^p_W\,\d\mu\).
\end{proof}

To conclude, we briefly comment on the equivalence with other notions of metric Sobolev space:
\begin{remark}[Equivalence with Newtonian--Sobolev spaces]\label{rmk:cf_Newtonian}{\rm
Let \((\X,\sfd,\mu)\) be a metric measure space and \(p\in(1,\infty)\). We denote by \(N^{1,p}(\X,\mu)\) the \emph{Newtonian--Sobolev space}, in the sense of \cite{Shanmugalingam00}.
It is known that \(H^{1,p}(\X,\mu)\subseteq N^{1,p}(\X,\mu)\subseteq W^{1,p}(\X,\mu)\) and that \(|Df|_W\leq|Df|_N\leq|Df|_H\) for every \(f\in H^{1,p}(\X,\mu)\),
where \(|Df|_N\) denotes the minimal \(p\)-weak upper gradient in the sense of Newtonian--Sobolev spaces; see \cite{HKST15} for the first inclusion and \cite{AmbrosioDiMarinoSavare,Sav19}
for the second. Consequently, Theorem \ref{thm:equiv_metric_Sob} implies \(N^{1,p}(\X,\mu)=H^{1,p}(\X,\mu)\) and \(|Df|_N=|Df|_H\) for every \(f\in N^{1,p}(\X,\mu)\).
\fr}\end{remark}

\begin{remark}[Equivalence with Sobolev spaces via test plans]\label{rmk:cf_test_plan}{\rm
In the paper \cite{AmbrosioGigliSavare11-3}, \(W^{1,p}(\X,\mu)\) was defined in a different way, in terms of \emph{test plans}. Nevertheless, it follows from the
proof arguments of \cite[Theorems 8.5 and 9.4]{AmbrosioDiMarinoSavare} that the notion in \cite{AmbrosioGigliSavare11-3} coincides with ours. We omit the details.
\fr}\end{remark}
\end{document}